\newtheorem{theorem}{Theorem}
\newtheorem{definition}[theorem]{Definition}
\newtheorem{lemma}[theorem]{Lemma}
\newenvironment{proof}[1][Proof]{\noindent\textbf{#1.} }{\ \rule{0.5em}{0.5em}}
\begin{document}

\title{Existence result for impulsive coupled systems on the half-line\thanks{%
This work was supported by National Funds through Funda\c{c}%
\~{a}o para a Ci\^{e}ncia e Tecnologia (FCT), project
UID/MAT/04674/2013 (CIMA)}}
\author{Feliz Minh\'{o}s$^{(\dag )}$ and Robert de Sousa$^{(\ddag )}$ \\
$^{(\dag )}${\small Departamento de Matem\'{a}tica, Escola de Ci\^{e}ncias e
Tecnologia,}\\
{\small Centro de Investiga\c{c}\~{a}o em Matem\'{a}tica e Aplica\c{c}\~{o}%
es (CIMA),}\\
{\small \ Instituto de Investiga\c{c}\~{a}o e Forma\c{c}\~{a}o Avan\c{c}ada,
}\\
{\small \ Universidade de \'{E}vora. Rua Rom\~{a}o Ramalho, 59, }\\
{\small \ 7000-671 \'{E}vora, Portugal}\\ {fminhos@uevora.pt}\\
$^{(\ddag )}${\small Faculdade de Ci\^{e}ncias e Tecnologia,}\\
{\small N$\acute{u}$cleo de Matem\'{a}tica e Aplica\c{c}\~{o}es (NUMAT),}\\
{\small Universidade de Cabo Verde. Campus de Palmarejo,}\\
{\small 279 Praia, Cabo Verde}\\ {robert.sousa@docente.unicv.edu.cv}}
\date{}
\maketitle

\begin{abstract}
This work considers a second order impulsive coupled system of
differential equations with generalized jump conditions
in half-line, which can depend on the impulses of the unknown
functions and their first derivatives.

The arguments apply the fixed point theory, Green's functions \newline technique,
$L^{1}$-Carath\'{e}odory functions and sequences and
Schauder's fixed point theorem.

The method is based on Carath\'{e}odory concept of functions and sequences,
together with the equiconvergence on infinity and on each impulsive
moment, and it allows to consider coupled fully
nonlinearities and very general impulsive functions.

\textbf{2010 Mathematics Subject Classification:} 34B15, 34B27, 34L30,
92B05\bigskip

\textbf{Keywords:} Coupled systems, $L^{1}$-Carath\'{e}odory functions and sequences,
Green's functions, equiconvergence at infinity and at the impulsive points,
Schauder's fixed-point theorem, problems on the half-line.
\end{abstract}

\section{Introduction}

Boundary-value problems in unbouded domains,
can be applied to a large variety of contexts,
(see for instance, \cite{BF, EKT, HLX, LGe1, Yuji, LHW, MR, PG, WLW}).

The theory of impulsive differential equations describes processes
in which a sudden change of state occurs at certain moments.
Some examples: in \cite {AKS}, Dishliev et al.,
make a very complete explanation of these equations,
where we can also find applications about
pharmacokinetic model,  logistic model,
Gompertz model (mathematical model for a time series),
Lotka-Volterra model and population dynamics. In \cite{G1}, Guo uses the fixed point
theory to investigate the existence and
uniqueness of solutions of two-point boundary value problems for second order
non-linear impulsive integro-differential equations on infinite intervals in a Banach
space. The same author, in \cite{G2}, by a comparison  result, obtains the existence of maximal and
minimal solutions of the initial value problems for a class of second-order
impulsive integro-differential equations in a Banach space. In \cite{LEE+LIU},
Lee and Liu study the existence
of extremal solutions for a class of singular boundary value problems of
second order impulsive differential equations. In \cite{FC1},  Minh\'{o}s,
and Carapinha study separated impulsive problems with a fully third
order differential equation, including an increasing homeomorphism, and
impulsive conditions given by generalized functions. In \cite{PLC},
Pang et al., consider a second-order impulsive differential equation with integral
boundary conditions, where they proposed some sufficient conditions for the existence of solutions,
by using the method of upper and lower solutions and Leray-Schauder degree theory.
In \cite{EYH}, Lee and Lee combine the method of upper and lower solutions with fixed point
index theorems on a cone to study the existence of positive solutions for a singular two point
boundary value problem of second order impulsive equation with fixed moments.

In \cite{WZL}, Wang, Zhang and Liang, consider the initial value problem for second order impulsive
integro-differential equations, which nonlinearity depend on the first derivative,
in a Banach space $E$:

\begin{equation*}
\left\{
\begin{array}{l}
x^{\prime \prime }(t)=f(t,x(t),x^{\prime }(t), Tx(t),Sx(t)),\;\;t\neq t_k,\;k=1,2,...m,\\
\Delta x(t_{k})=I_{k}(x(t_{k}),x^{\prime }(t_{k})),\;\;k=1,2,...m, \\
\Delta x^\prime(t_{k})=\overline{I_{k}}(x(t_{k}),x^{\prime }(t_{k})),\;\;k=1,2,...m,\\
x(0)=x_0,\;\;x^\prime(0)=x_0^*,
\end{array}%
\right.
\end{equation*}%
where $f \in C[J\times E^4,E]$, $J=[0,1]$, $0<t_0<t_1<...<t_k<...<t_m<1$.
$I_k,\,\overline{I_{k}} \in C[E^2, E]$, $k=1,2,...m$, $x_0, x_0^* \in E$,
$\theta$ denotes de zero element of $E$, $J^\prime =J\setminus\{t_1,t_2,...,t_m\}$ and $J_0=[0,t_1]$,
$J_k=(t_k,t_{k+1}]$, $k=1,2,...m$, $t_{m+1}=1$,
\begin{equation*}
Tx(t)=\int_0^t k(t,s)x(s)ds,\;\;\;Sx(t)=\int_0^1 h(t,s)x(s)ds,\;\;\forall t \in J,
\end{equation*}
where $k\in C[D, \mathbb{R}_+]$, $D=\{(t,s):J\times J|t\geq s\}$, $h \in C[J\times J,  \mathbb{R}_+]$, $\mathbb{R}_+=[0,+\infty)$.

In \cite{Kaufmann}, we found the study of second-order nonlinear differential equation

\begin{equation*}
\begin{array}{c}
{}(p(t)u^\prime(t))^\prime=f(t,u(t)),\;\;\;t \in (0,\;\infty)\backslash \{t_1,t_2,...,t_n\},\\
\end{array}%
\end{equation*}%
where $f:\left[ 0,+\infty\right) \times \mathbb{R}\rightarrow \mathbb{R}$ is continuous,
$p \in [ 0,+\infty)\cap C(0,+\infty)$ and $p(t)\geq0$ for all $t>0$, with the impulsive conditions
\begin{equation*}
\Delta u^\prime(t_k)=I_k(u(t_k)),\;\;k=1,...n,
\end{equation*}%
where $I_k:\mathbb{R}\rightarrow \mathbb{R}$, $k=1,...,n$, are Lipschitz continuous,
$n\geq1$, and the boundary conditions
\begin{equation*}
\begin{array}{c}
\alpha u(0)-\beta \lim_{t\rightarrow 0^+}p(t)u^\prime(t)=0,\\
\gamma \lim_{t\rightarrow \infty}u(t)+ \delta\lim_{t\rightarrow \infty} p(t)u^\prime(t)=0.
\end{array}%
\end{equation*}%
In order to ensure that the non-resonant scenario is considered, the condition
$$\rho=\gamma\beta+\alpha\delta+\alpha\gamma\int_0^\infty\frac{d \tau}{p(\tau)}\neq0$$
is imposed.

In \cite{EHLEE}, the authors prove the existence of multiple positive solutions for a singular Gelfand type
boundary value problem with the following second-order impulsive differential system:

\begin{equation*}
\begin{array}{l}
u^{\prime\prime}(t)\lambda h_1(t)f(u(t),v(t))=0,\;\;\; t \in (0,\,1),\;\;t\neq t_k,\\ \\
v^{\prime\prime}(t)\mu h_2(t)g(u(t),v(t))=0,\;\;\; t \in (0,\,1),\;\;t\neq t_k,\\ \\
\;\;\;\;\;\Delta u\mid_{t=t_1}=I_u(u(t_1)),\;\;\Delta v\mid_{t=t_1}=I_v(v(t_1)),\\ \\
\;\;\;\;\;\Delta u^\prime\mid_{t=t_1}=N_u(u(t_1)),\;\;\Delta v^\prime\mid_{t=t_1}=N_v(v(t_1)),\\ \\
u(0)=a\geq0,\;\;v(0)=b\geq0,\;\;u(1)=c\geq0,\;\;v(1)=d\geq0,
\end{array}
\end{equation*}
where $\lambda$, $\mu$ are positive real parameters,
$\Delta u\mid_{t=t_1}=u(t_1^+)-u(t_1^-)$, $\Delta u^\prime\mid_{t=t_1}=u^\prime(t_1^+)-u^\prime(t_1^-)$,
$f, g \in C(\mathbb{R}^2,\, (0, \infty))$,  $I_u,\, I_v \in C(\mathbb{R},\,\mathbb{R})$ satisfying
$I_u(0) =0=I_v(0)$, $N_u,\,N_v \in C(\mathbb{R},\, (-\infty,\,0])$, and $h_1,\,h_2 \in  C((0,\,1),\,(0,\,\infty))$.

Inspired by these works, we follow arguments and techniques
considered in \cite{MinhosI} and \cite{FR1}, in particular, about impulsive problems on the half-line
and second order coupled systems on the half-line, respectively.
However, it is the first time where the
existence of solutions is obtained for impulsive
coupled systems, with generalized jump conditions
in half-line and with full nonlinearities,
that depend on the unknown functions and their first
derivatives.

In particular, in the present paper, we consider the second order impulsive coupled system in
half-line composed by the differential equations
\begin{equation}
\left\{
\begin{array}{l}
u^{\prime \prime }(t)=f(t,u(t),v(t),u^{\prime }(t),v^{\prime }(t)),\text{ }
t\neq t_k, \\
v^{\prime \prime }(t)=h(t,u(t),v(t),u^{\prime }(t),v^{\prime }(t)), \text{ }t\neq \tau_j,
\end{array}%
\right.  \label{art5}
\end{equation}%
where $f,h:\left[ 0,+\infty\right[ \times \mathbb{R}^{4}\rightarrow \mathbb{R}
$ are $L^{1}$-Carath\'{e}odory functions, the boundary
conditions
\begin{equation}
\left\{
\begin{array}{c}
u(0)=A_{1},\;\,v(0)=A_{2}, \\
u^{\prime }(+\infty )=B_{1},\;\,v^{\prime }(+\infty )=B_{2},%
\end{array}%
\right.  \label{cond5}
\end{equation}%
for  $A_1, A_2, B_1, B_2\in \mathbb{R}$ and the generalized impulsive conditions%
\begin{equation}
\begin{cases}
\Delta u(t_{k})=I_{0k}(t_{k},u(t_{k}),u^{\prime }(t_{k})), \\
\Delta u^{\prime }(t_{k})=I_{1k}(t_{k},u(t_{k}),u^{\prime }(t_{k})), \\
\Delta v(\tau _{j})=J_{0j}(\tau _{j},v(\tau _{j}),v^{\prime }(\tau _{j})) \\
\Delta v^{\prime }(\tau _{j})=J_{1j}(\tau _{j},v(\tau _{j}),v^{\prime }(\tau
_{j})),%
\end{cases}
\label{imp5}
\end{equation}%
where, $k,j\in \mathbb{N}$,
\begin{equation*}
\Delta u^{(i)}(x_{k})=u^{(i)}(x_{k}^{+})-u^{(i)}(x_{k}^{-}),\;\;\Delta
v^{(i)}\left( \tau _{j}\right) =v^{(i)}(\tau _{j}^{+})-v^{(i)}(\tau
_{j}^{-}),
\end{equation*}%
$I_{ik},J_{ij}\in C(\left[ 0,+\infty \right[ \times \mathbb{R}^{2},\mathbb{R}%
),$ $i=0,1,$ with $t_{k},\tau _{j}$ fixed points such that $%
0<t_{1}<\cdots <t_{k}<\cdots $, $0<\tau _{1}<\cdots<\tau
_{j}<\cdots $ and
\begin{equation*}
\underset{k\rightarrow +\infty }{\lim }t_{k}=+\infty ,\;\,\underset{%
j\rightarrow +\infty }{\lim }\tau_{j}=+\infty .
\end{equation*}

Some points play a key role, such as: Carath\'{e}odory functions and sequences,
the equiconvergence at each impulsive moment and at infinity,
Banach spaces with weighted norms, and Schauder's fixed
point theorem, to prove the existence of solutions.

The paper is organized in the following way: In section 2 we present some
auxiliary results and definitions. Section 3 contains an existence result
for the impulsive coupled systems with generalized jump conditions
in half-line. In Section 4 the main existence theorem
is applied to a real phenomena: a model of the motion of a spring pendulum.

\section{Definitions and preliminary results}

Define \begin{equation*}u(t_{k}^{\pm }):=\underset{t\rightarrow t_{k}^{\pm }}{\lim }u(t),\;\;
v(\tau_{j}^{\pm }):=\underset{j\rightarrow \tau_{j}^{\pm }}{\lim }\tau(j),\end{equation*}%
and consider the set
\begin{equation*}
PC_{1}\left( \left[ 0,+\infty \right[\right) =\left\{
\begin{array}{c}
u:u\in C( \left[ 0,+\infty \right[ ,\mathbb{R})\text{ continuous for }t\neq
t_{k},u(t_{k})=u(t_{k}^{-}), \\
u(t_{k}^{+})\text{ exists for }k \in \mathbb{N}%
\end{array}%
\right\},
\end{equation*}%
$PC_{1}^n\left( \left[ 0,+\infty \right[\right) =\left\{ u:u^{(n)}(t) \in
PC_{1}\left( \left[ 0,+\infty \right[\right) \right\},\,n=1,2,$
\begin{equation*}
PC_{2}\left(\left[ 0,+\infty \right[ \right) =\left\{
\begin{array}{c}
v:v\in C(\left[ 0,+\infty \right[,\mathbb{R})\text{ continuous for }\tau
\neq \tau _{j},v(\tau _{j})=v(\tau _{j}^{-}), \\
v(\tau _{j}^{+})\text{ exists for }j \in \mathbb{N}%
\end{array}%
\right\},
\end{equation*}
and $PC_{2}^n\left( \left[ 0,+\infty \right[\right) =\left\{ v:v^{(n)}(j)
\in PC_{2}\left( \left[ 0,+\infty \right[\right) \right\},\,n=1,2.$\newline

Denote the space%
\begin{equation*}
X_{1}:=\left\{
x:x\in PC_{1}^{1}\left( \left[ 0,+\infty \right[ \right) :\,\underset{%
t\rightarrow +\infty }{\lim }\frac{x(t)}{1+t}\in \mathbb{R},\;\,
\underset{t\rightarrow +\infty }{\lim }x^{\prime }(t)\in \mathbb{R}%
\right\} ,
\end{equation*}

\begin{equation*}
X_{2}:=\left\{
y:y\in PC_{2}^{1}\left( \left[ 0,+\infty \right[ \right) :\,\underset{%
t\rightarrow +\infty }{\lim }\frac{y(t)}{1+t}\in \mathbb{R},\;\,
\underset{t\rightarrow +\infty }{\lim }y^{\prime }(t)\in \mathbb{R}%
\right\} ,
\end{equation*}%
and $X:=X_{1}\times X_{2}$.

In fact, $X_1$, $X_2$ and $X$ are Banach spaces with the norms

\begin{equation*}\left\Vert u\right\Vert _{X_1}=\max \left\{\left\Vert u \right\Vert_0, \left\Vert
u^\prime \right\Vert_1 \right\},\;\;\left\Vert v\right\Vert _{X_2}=\max
\left\{\left\Vert v \right\Vert_0, \left\Vert v^\prime \right\Vert_1
\right\},\end{equation*}
and \begin{equation*}\left\Vert \left( u,v\right) \right\Vert _{X}=\max \left\{
\left\Vert u\right\Vert _{X_{1}},\left\Vert v\right\Vert _{X_{2}}\right\},\end{equation*}%
respectively, where

\begin{equation*}
\left\Vert \Upsilon \right\Vert_0:=\sup_{t\in \left[ 0,+\infty \right[}
\frac{|\Upsilon(t)|}{1+t}\;\, \text{and}\;\,\left\Vert \Upsilon
\right\Vert_1:=\sup_{t\in \left[ 0,+\infty \right[} |\Upsilon(t)|.
\end{equation*}

\begin{definition}
\label{Lcar5} A function $g:\left[0,+\infty\right[\times \mathbb{R}%
^{4}\rightarrow \mathbb{R}$ is $L^{1}-$ Carath\'{e}odory if

\begin{enumerate}
\item[$i)$] for each $(x,y,z,w)\in \mathbb{R}^4$, $t\mapsto g(t,x,y,z,w)$ is
measurable on $\left[ 0,+\infty\right[;$

\item[$ii)$] for almost every $t\in \left[ 0,+\infty\right[,$ $%
(x,y,z,w)\mapsto g(t,x,y,z,w)$ is continuous on $\mathbb{R}^{4};$

\item[$iii)$] for each $\rho >0$, there exists a positive function $\phi
_{\rho }\in L^{1}\left(\left[ 0,+\infty\right[\right)$ such that, for $%
\left( x, y, z, w\right) \in \mathbb{R}^{4}$ with
\begin{equation}
\sup_{t\in \left[ 0,+\infty \right[}\left\{\frac{|x|}{1+t},\frac{|y|}{1+t},
|z|,\left\vert w\right\vert\right\} <\rho,  \label{rho5}
\end{equation}%
one has
\begin{equation}  \label{caract5}
\left\vert g(t,x,y,z,w)\right\vert \leq \phi _{\rho }(t),\text{ }a.e.\;\,%
\text{}t\in \left[ 0,+\infty\right[.
\end{equation}
\end{enumerate}
\end{definition}

\begin{definition}
\label{seq5} A sequence $(c_n)_{n \in \mathbb{N}}:\left[0,+\infty\right[%
\times \mathbb{R}^{2}\rightarrow \mathbb{R}$ is a Carath\'{e}odory sequence
if it verifies

\begin{enumerate}
\item[$i)$] for each $(a,b)\in \mathbb{R}^2$, $(a,b)\rightarrow
c_n(t,a,b)$ is continuous for all $n \in \mathbb{N}$;

\item[$ii)$] for each $\rho >0$, there are nonnegative constants $%
\chi_{n,\rho}\geq0$ with $\sum_{n=1}^{+\infty}\chi_{n,\rho}<+\infty$ such
that for $|a|<\rho(1+t)$, $t \in \left[0,+\infty\right[$ and $|b|<\rho$
we have $|c_n(t,a,b)|\leq\chi_{n,\rho}$, for every $n \in \mathbb{N}$, $%
t\in \left[ 0,+\infty\right[$.
\end{enumerate}
\end{definition}

\begin{lemma}
\label{lema1_5}Let $f,h :\left[ 0,+\infty\right[\times \mathbb{R}%
^{4}\rightarrow \mathbb{R}$ be $L^{1}-$ Carath\'{e}odory functions. Then the
system (\ref{art5}) with conditions (\ref{cond5}), (\ref{imp5}), has a solution $(u(t),\,v(t))$
expressed by%
\begin{eqnarray*}
&&u(t)=A_{1}+B_{1}t+\sum_{0<t_{k}<t<+\infty }\left[
I_{0k}(t_{k},u(t_{k}),u^{\prime }(t_{k}))+I_{1k}(t_{k},u(t_{k}),u^{\prime
}(t_{k}))\left( t-t_{k}\right) \right]  \\
&&-t\sum_{k=1}^{+\infty }I_{1k}(t_{k},u(t_{k}),u^{\prime
}(t_{k}))+\int_{0}^{+\infty }G(t,s)f(s,u(s),v(s),u^{\prime }(s),v^{\prime
}(s))ds, \\
&& \\
&&v(t)=A_{2}+B_{2}t+\sum_{0<\tau _{j}<t<+\infty }\left[ J_{0j}(\tau
_{j},v(\tau _{j}),v^{\prime }(\tau _{j}))+J_{1j}(\tau _{j},v(\tau
_{j}),v^{\prime }(\tau _{j}))\left( t-\tau _{j}\right) \right]  \\
&&-t\sum_{j=1}^{+\infty }J_{1j}(\tau _{j},v(\tau _{j}),v^{\prime }(\tau
_{j}))+\int_{0}^{+\infty }G(t,s)h(s,u(s),v(s),u^{\prime }(s),v^{\prime
}(s))ds,
\end{eqnarray*}%
\textit{where}%
\begin{equation}
G(t,s)=\left\{
\begin{array}{ll}
-t, & \;\;\;\;0\leq t\leq s\leq +\infty  \\
&  \\
-s, & \;\;\;\;0\leq s\leq t\leq +\infty .%
\end{array}%
\right.   \label{G_5}
\end{equation}
\end{lemma}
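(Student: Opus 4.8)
The plan is to regard the coupled system as two scalar second-order impulsive boundary value problems whose right-hand sides are treated as prescribed forcing terms, since the representation only involves $f$ and $h$ evaluated along a fixed $(u,v,u',v')$. The $u$-equation (impulses at the $t_k$, data $A_1,B_1$ and $I_{0k},I_{1k}$) and the $v$-equation (impulses at the $\tau_j$, data $A_2,B_2$ and $J_{0j},J_{1j}$) are structurally identical, so I would prove the formula for $u$ and obtain the one for $v$ verbatim. Put $y(s):=f(s,u(s),v(s),u'(s),v'(s))$; by the $L^{1}$-Carath\'eodory hypothesis (Definition \ref{Lcar5}) one has $y\in L^{1}(\left[0,+\infty\right[)$, so every integral below converges.

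First I would integrate $u''=y$ by means of the fundamental theorem of calculus for piecewise absolutely continuous functions, which reproduces the prescribed jumps. Integrating on $\left[t,+\infty\right[$ and using $u'(+\infty)=B_1$ together with $\Delta u'(t_k)=I_{1k}$ (abbreviating $I_{ik}:=I_{ik}(t_k,u(t_k),u'(t_k))$) gives
\[
u'(t)=B_1-\int_t^{+\infty}y(s)\,ds-\sum_{t<t_k<+\infty}I_{1k}.
\]
Integrating again on $\left[0,t\right]$, now using $u(0)=A_1$ and $\Delta u(t_k)=I_{0k}$, yields
\[
u(t)=A_1+B_1 t-\int_0^t\!\int_\tau^{+\infty}y(s)\,ds\,d\tau-\int_0^t\sum_{\tau<t_k}I_{1k}\,d\tau+\sum_{0<t_k<t}I_{0k}.
\]

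The core computation is then to recognise the Green's function and to reshape the impulse terms. Applying Tonelli's theorem to the double integral, whose domain is $\{0\le\tau\le t,\ \tau\le s<+\infty\}$, and integrating in $\tau$ first gives $\int_0^{+\infty}\min\{t,s\}\,y(s)\,ds$; since $G(t,s)=-\min\{t,s\}$ by (\ref{G_5}), this term equals $\int_0^{+\infty}G(t,s)y(s)\,ds$. For the impulse term I would use $\int_0^t\mathbf 1_{\{\tau<t_k\}}\,d\tau=\min\{t,t_k\}$, so that $\int_0^t\sum_{\tau<t_k}I_{1k}\,d\tau=\sum_k I_{1k}\min\{t,t_k\}=\sum_{t_k<t}I_{1k}t_k+t\sum_{t_k\ge t}I_{1k}$. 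Substituting and regrouping with the identity $t\sum_{0<t_k<t}I_{1k}-t\sum_{k=1}^{+\infty}I_{1k}=-t\sum_{t_k\ge t}I_{1k}$ turns the finite sum into $\sum_{0<t_k<t}\left[I_{0k}+I_{1k}(t-t_k)\right]$ plus the separate tail $-t\sum_{k=1}^{+\infty}I_{1k}$, which is precisely the asserted expression.

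I expect the main obstacle to be analytic bookkeeping rather than a deep idea: justifying the interchange of summation and integration on the unbounded interval, and the convergence of the infinite impulse series $\sum_k I_{1k}$ and its tails, on which the rearrangement producing the $-t\sum_{k=1}^{+\infty}I_{1k}$ term relies. This is exactly where the Carath\'eodory sequence hypothesis (Definition \ref{seq5}) and the $L^{1}$-bound (\ref{caract5}) are used, ensuring absolute convergence so that Tonelli and the reindexing are legitimate. A final verification—differentiating the resulting formula under the integral sign and reading off the jumps of $G$ and of the piecewise sums at each $t_k$—confirms that $(u,v)$ solves (\ref{art5}) with (\ref{cond5}) and (\ref{imp5}), closing the equivalence.
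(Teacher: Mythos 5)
Your proof is correct, and it is essentially the argument the authors have in mind: the paper itself gives no proof of Lemma \ref{lema1_5}, remarking only that it ``follows standard techniques and it is omitted,'' and your derivation is exactly that standard technique. The two integrations --- first on $\left[t,+\infty\right[$ using $u^{\prime }(+\infty )=B_{1}$ and the jumps $\Delta u^{\prime }(t_{k})=I_{1k}$, then on $\left[ 0,t\right] $ using $u(0)=A_{1}$ and $\Delta u(t_{k})=I_{0k}$ --- followed by Tonelli on the region $\{0\leq \tau \leq t,\ \tau \leq s\}$ to produce $\int_{0}^{+\infty }G(t,s)y(s)\,ds$ with $G(t,s)=-\min \{t,s\}$ as in (\ref{G_5}), and the regrouping $\sum_{k}I_{1k}\min \{t,t_{k}\}=\sum_{t_{k}<t}I_{1k}t_{k}+t\sum_{t_{k}\geq t}I_{1k}$, check out line by line and reproduce the asserted formula, with the $v$-equation handled verbatim. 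One point worth making explicit: as stated, the lemma assumes only that $f,h$ are $L^{1}$-Carath\'{e}odory and that the impulse functions are continuous, which by itself does not guarantee convergence of the infinite series $\sum_{k=1}^{+\infty }I_{1k}(t_{k},u(t_{k}),u^{\prime }(t_{k}))$ appearing in the representation; the absolute convergence on which your interchange of sum and integral and your tail rearrangement rely requires the Carath\'{e}odory-sequence bounds of Definition \ref{seq5}, i.e.\ the summability conditions (\ref{marimp5.1})--(\ref{marimp5}) that the paper imposes only later in Theorem \ref{main5}. You correctly flag this as the place where Definition \ref{seq5} is used, so your write-up in fact patches a small imprecision in the lemma's hypotheses rather than introducing a gap of its own; likewise, your observation that $y\in L^{1}$ needs $(u,v)\in X$ (so that a suitable $\rho $ in (\ref{rho5}) exists) is the right reading of the statement.
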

The proof follows standard techniques and it is omitted.
\begin{definition}
\label{comp} The operator $T:X\rightarrow X$ is said to be compact if $T(D)$ is
relatively compact, for $D\subseteq X$.
\end{definition}

The existence tool will be given by Schauder's fixed point theorem:

\begin{theorem}
\label{schauder} (\cite{zeidler}) Let $Y$ be a nonempty, closed, bounded and
convex subset of a Banach space $X$, and suppose that $P:Y\rightarrow Y$ is
a compact operator. Then $P$ as at least one fixed point in $Y$.
\end{theorem}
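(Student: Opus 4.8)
The plan is to reduce this infinite-dimensional statement to Brouwer's fixed point theorem through a finite-dimensional approximation of the compact operator $P$. First I would use compactness directly: since $P$ is compact and $Y$ is bounded, the set $\overline{P(Y)}$ is compact, so for every $\varepsilon>0$ it admits a finite cover by open balls $B(y_1,\varepsilon),\dots,B(y_n,\varepsilon)$ whose centers satisfy $y_i\in P(Y)\subseteq Y$.

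Next I would construct the Schauder projection associated with this cover. Using the continuous functions $\mu_i(w)=\max\{0,\varepsilon-\|w-y_i\|\}$, which do not all vanish on $\overline{P(Y)}$, I would define
\[
P_\varepsilon(x)=\frac{\sum_{i=1}^{n}\mu_i(Px)\,y_i}{\sum_{i=1}^{n}\mu_i(Px)}.
\]
Writing $C_n=\mathrm{conv}\{y_1,\dots,y_n\}$, convexity of $Y$ together with $y_i\in Y$ gives $C_n\subseteq Y$; the map $P_\varepsilon$ is continuous, sends $Y$ into $C_n$, and satisfies $\|P_\varepsilon x-Px\|<\varepsilon$ for all $x\in Y$. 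Restricting $P_\varepsilon$ to the compact convex finite-dimensional set $C_n$, Brouwer's theorem supplies a fixed point $x_\varepsilon\in C_n$ with $P_\varepsilon x_\varepsilon=x_\varepsilon$.

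Finally I would pass to the limit along $\varepsilon=1/m$. This yields points $x_m$ with $\|x_m-Px_m\|=\|P_{1/m}x_m-Px_m\|<1/m$. Since $Px_m\in\overline{P(Y)}$ and the latter is compact, some subsequence $Px_{m_j}$ converges to a point $z\in\overline{P(Y)}\subseteq Y$, using that $Y$ is closed. The approximation inequality then forces $x_{m_j}\to z$ as well, and continuity of $P$ gives $Px_{m_j}\to Pz$; matching the two limits of $Px_{m_j}$ produces $Pz=z$, the desired fixed point.

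The main obstacle I anticipate is the careful justification of the Schauder projection step: checking that the denominator $\sum_{i=1}^{n}\mu_i(Px)$ never vanishes on $P(Y)$ so that $P_\varepsilon$ is well defined, that its range really lies in $C_n\subseteq Y$ (this is exactly where convexity of $Y$ is indispensable), and that the uniform estimate $\|P_\varepsilon x-Px\|<\varepsilon$ holds, since these are the facts that allow Brouwer's theorem to be invoked on a genuinely finite-dimensional compact convex set.
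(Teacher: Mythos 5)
Your proof is correct and complete: the Schauder-projection construction, Brouwer's theorem applied on the finite-dimensional convex hull $C_n$, and the limiting argument along $\varepsilon=1/m$ constitute the classical proof of this theorem, which is precisely the argument in the reference \cite{zeidler} that the paper cites --- the paper itself states the theorem without proof. The only point worth flagging is that your last step uses continuity of $P$ (to conclude $Px_{m_j}\rightarrow Pz$); this is part of the standard definition of a compact operator as in \cite{zeidler}, whereas the paper's Definition \ref{comp} mentions only relative compactness of the image, so under the paper's wording continuity of $P$ must be added as an explicit hypothesis, consistent with the paper verifying continuity of its operator $T$ separately in Step~1.
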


\section{Existence result}

In this section we prove the existence of solution for the problem (\ref%
{art5})-(\ref{imp5}).

\begin{theorem}
\label{main5} Let $f,h:\left[ 0,+\infty \right[ \times \mathbb{R}%
^{4}\rightarrow \mathbb{R}$ be $L^{1}-$ Carath\'{e}odory functions verifying
(\ref{caract5}) with $\Phi _{\rho }(t)$, $\Psi _{\rho }(t)$, respectively.
If $I_{0k},I_{1k},J_{0j},J_{1j}:\left[ 0,+\infty \right[ \times \mathbb{R}%
^{2}\rightarrow \mathbb{R}$ are Carath\'{e}odory sequences with nonnegative
constants $\varphi _{k,\rho }\geq 0,\psi _{k,\rho }\geq 0,\phi _{j,\rho
}\geq 0,\vartheta _{j,\rho }\geq 0$ and
\begin{equation}
\sum_{k=1}^{+\infty }\varphi _{k,\rho }<+\infty ,\;\;\sum_{k=1}^{+\infty
}\psi _{k,\rho }<+\infty ,\;\;\sum_{j=1}^{+\infty }\phi _{j,\rho }<+\infty
,\;\;\sum_{j=1}^{+\infty }\vartheta _{j,\rho }<+\infty ,
\label{marimp5.1}
\end{equation}%
such that
\begin{equation}
\begin{array}{l}
|I_{0k}(t_{k},x,y)|\leq \varphi _{k,\rho },\;\;|I_{1k}(t_{k},x,y)|\leq \psi
_{k,\rho }, \\
|J_{0k}(t_{k},x,y)|\leq \phi _{j,\rho },\;\;|J_{1k}(t_{k},x,y)|\leq
\vartheta _{j,\rho },%
\end{array}
\label{marimp5}
\end{equation}%
for $|x|<\rho (1+t)$, $|y|<\rho ,$ $t\in \left[ 0,+\infty \right[ $ then
there is at least a pair $(u,v)\in \Big(PC_1^2\left(\left[ 0,+\infty\right[ \right)\times
PC_2^2\left(\left[ 0,+\infty\right[ \right)\Big)\cap X$, solution of (\ref{art5})-(\ref{imp5}).
\end{theorem}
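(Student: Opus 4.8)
The plan is to recast the problem (\ref{art5})--(\ref{imp5}) as a fixed point equation and to invoke Schauder's theorem (Theorem \ref{schauder}). Define $T\colon X\to X$ by $T(u,v)=\big(T_{1}(u,v),T_{2}(u,v)\big)$, where $T_{1}(u,v)(t)$ and $T_{2}(u,v)(t)$ are precisely the right-hand sides displayed in Lemma \ref{lema1_5}. By that lemma, $(u,v)$ solves (\ref{art5})--(\ref{imp5}) if and only if $(u,v)$ is a fixed point of $T$, so it suffices to produce a fixed point. First I would verify that $T$ indeed maps $X$ into $X$, i.e.\ that the two limits at $+\infty$ defining $X_{1},X_{2}$ exist. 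Differentiating $T_{1}(u,v)$ and using (\ref{G_5}) gives
\begin{equation*}
(T_{1}(u,v))'(t)=B_{1}-\sum_{t_{k}\ge t}I_{1k}(t_{k},u(t_{k}),u'(t_{k}))-\int_{t}^{+\infty}f(s,u(s),v(s),u'(s),v'(s))\,ds,
\end{equation*}
so as $t\to+\infty$ the tail of the convergent series (by (\ref{marimp5.1})) and the $L^{1}$-tail of the integral both vanish, whence $(T_{1}(u,v))'(+\infty)=B_{1}\in\mathbb{R}$; a similar dominated-convergence argument (for the series via summability of $\psi_{k,\rho}$ and for the integral via $\Phi_{\rho}\in L^{1}$) shows $\lim_{t\to+\infty}\frac{T_{1}(u,v)(t)}{1+t}=B_{1}$, and analogously for $T_{2}$.

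Next I would construct the invariant set. The kernel satisfies $\frac{|G(t,s)|}{1+t}\le 1$ and $|\partial_{t}G(t,s)|\le 1$, so combining these with the Carath\'{e}odory bounds $\Phi_{\rho},\Psi_{\rho}\in L^{1}$ from (\ref{caract5}) and the impulse estimates (\ref{marimp5}), a direct majorization gives, for every $(u,v)$ with $\Vert(u,v)\Vert_{X}<\rho$,
\begin{equation*}
\Vert T_{1}(u,v)\Vert_{X_{1}}\le M_{1}(\rho),\qquad \Vert T_{2}(u,v)\Vert_{X_{2}}\le M_{2}(\rho),
\end{equation*}
where $M_{1}(\rho)$ is a finite quantity built from $|A_{1}|,|B_{1}|$, the finite sums $\sum_{k}\varphi_{k,\rho}$, $\sum_{k}\psi_{k,\rho}$, and $\Vert\Phi_{\rho}\Vert_{L^{1}}$ (and symmetrically for $M_{2}(\rho)$). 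Setting $M(\rho)=\max\{M_{1}(\rho),M_{2}(\rho)\}$, I would fix $\rho$ so that the compatibility inequality $M(\rho)\le\rho$ holds; then $Y:=\{(u,v)\in X:\Vert(u,v)\Vert_{X}\le\rho\}$ is nonempty, closed, bounded and convex, and $T(Y)\subseteq Y$.

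It remains to show $T$ is compact on $Y$. For continuity, if $(u_{n},v_{n})\to(u,v)$ in $X$ the sequence is bounded, so the integrands are dominated by $\Phi_{\rho},\Psi_{\rho}$ and the impulse terms by the summable constants; the pointwise continuity of $f,h$ in the state variables (Definition \ref{Lcar5}$\,ii$) and of the impulse maps, together with the Lebesgue dominated convergence theorem for the integrals and dominated convergence for the series, yield $T(u_{n},v_{n})\to T(u,v)$. For relative compactness of $T(Y)$, the classical Arzel\`{a}--Ascoli theorem does not apply, since $X$ consists of piecewise-continuous functions on the unbounded interval $[0,+\infty[$; instead I would use the generalized criterion requiring that $T(Y)$ be (a) equicontinuous on each compact subinterval between consecutive impulse points, (b) equiconvergent at $+\infty$, meaning $\frac{T_{i}(u,v)(t)}{1+t}$ and $(T_{i}(u,v))'(t)$ approach their limits uniformly for $(u,v)\in Y$, and (c) equiconvergent at each impulsive moment $t_{k}$ (resp.\ $\tau_{j}$), meaning the one-sided limits are attained uniformly. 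Equicontinuity follows from absolute continuity of $\int\Phi_{\rho}$ and the bound $|\partial_{t}G|\le1$, while (b) and (c) follow from the uniform smallness of the tails $\sum_{t_{k}\ge t}\psi_{k,\rho}$ and $\int_{t}^{+\infty}\Phi_{\rho}$, controlled uniformly on $Y$.

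With compactness established, Schauder's theorem provides a fixed point $(u,v)\in Y$, which by Lemma \ref{lema1_5} solves (\ref{art5})--(\ref{imp5}); and since $u''=f(\cdot)$, $v''=h(\cdot)$ a.e.\ with $f,h$ Carath\'{e}odory, the pair lies in $\big(PC_{1}^{2}\left(\left[0,+\infty\right[\right)\times PC_{2}^{2}\left(\left[0,+\infty\right[\right)\big)\cap X$. I expect the main obstacle to be step (c)/(b) of the compactness argument: verifying equiconvergence simultaneously at infinity and at the infinitely many impulsive points, because the half-line and the jumps both destroy the hypotheses of the ordinary Arzel\`{a}--Ascoli theorem, and it is exactly here that the summability in (\ref{marimp5.1}) and the $L^{1}$-tail control of $\Phi_{\rho},\Psi_{\rho}$ are indispensable.
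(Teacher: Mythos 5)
Your proposal is correct and follows essentially the same route as the paper: the same fixed-point reformulation of (\ref{art5})--(\ref{imp5}) via the representation in Lemma \ref{lema1_5}, the same compactness scheme (well-definedness and continuity by dominated convergence, uniform boundedness using $\sup_{t}\frac{|G(t,s)|}{1+t}\leq 1$, equicontinuity between consecutive impulses, and equiconvergence both at each impulsive moment and at infinity in the weighted norms), and the same final appeal to Schauder's theorem. Your compatibility inequality $M(\rho)\leq\rho$ for the invariant ball is exactly how the paper closes its Step 5, where $\rho_{2}$ in (\ref{ro2}) is taken as the maximum of the quantities you denote $M_{1}(\rho)$ and $M_{2}(\rho)$.
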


\begin{proof}
Define the operators $T_{1}:X\,\rightarrow X_{1}\ ,T_{2}:X\,\rightarrow X_{2}
$, and $T\,:\,X\,\rightarrow X$ by
\begin{equation}
T\left( u,v\right) =\left( T_{1}\left( u,v\right) ,T_{2}\left( u,v\right)
\right) ,  \label{T5}
\end{equation}%
with%
\begin{eqnarray*}
&&\left( T_{1}\left( u,v\right) \right) \left( t\right) =A_{1}+B_{1}t \\
&&+\sum_{0<t_{k}<t<+\infty }\left[ I_{0k}(t_{k},u(t_{k}),u^{\prime
}(t_{k}))+I_{1k}(t_{k},u(t_{k}),u^{\prime }(t_{k}))\left( t-t_{k}\right) %
\right]  \\
&&-t\sum_{k=1}^{+\infty }I_{1k}(t_{k},u(t_{k}),u^{\prime
}(t_{k}))+\int_{0}^{+\infty }G(t,s)f(s,u(s),v(s),u^{\prime }(s),v^{\prime
}(s))ds,\\
\end{eqnarray*}%
\begin{eqnarray*}
&&\left( T_{2}\left( u,v\right) \right) \left( t\right) =A_{2}+B_{2}t \\
&&+\sum_{0<\tau _{j}<t<+\infty }\left[ J_{0j}(\tau _{j},v(\tau
_{j}),v^{\prime }(\tau _{j}))+J_{1j}(\tau _{j},v(\tau _{j}),v^{\prime }(\tau
_{j}))\left( t-\tau _{j}\right) \right]  \\
&&-t\sum_{j=1}^{+\infty }J_{1j}(\tau _{j},v(\tau _{j}),v^{\prime }(\tau
_{j}))+\int_{0}^{+\infty }G(t,s)h(s,u(s),v(s),u^{\prime }(s),v^{\prime
}(s))ds,
\end{eqnarray*}%
where $G(t,s)$ is defined in (\ref{G_5}).

Let $\left( u,v\right) \in X.$

The proof will follow several steps which, for clearness, are detailed for operator $T_{1}\left(
u,v\right).$ The technique for operator $T_{2}\left( u,v\right) $ is
similar.\medskip

\textbf{Step 1}: $T$ \textit{\ is well defined and continuous on }$X$\textit{%
.}\medskip

By the Lebesgue dominated convergence theorem, (\ref{marimp5.1}) and (\ref{marimp5}),

\begin{eqnarray*}
&&\lim_{t\rightarrow+\infty} \frac{T_{1}\left( u,v\right)(t)}{1+t}=
\lim_{t\rightarrow+\infty} \left(\frac{A_1+B_1t}{1+t}\right. \\
&&+\frac{1}{1+t}\sum_{0<t_{k}<t<+\infty}\left[ I_{0k}(t_{k},u(t_{k}),u^{%
\prime }(t_{k}))+I_{1k}(t_{k},u(t_{k}),u^{\prime}(t_{k}))\left(
t-t_{k}\right) \right] \\
&&\left.-\frac{t}{1+t}\sum_{k=1}^{+\infty}I_{1k}(t_{k},u(t_{k}),u^{%
\prime}(t_{k}))\right) \\
&&+\int_{0}^{+\infty}\lim_{t\rightarrow+\infty}\frac{G(t,s)}{1+t}%
f(s,u(s),v(s),u^{\prime }(s),v^{\prime }(s))ds \\
&\leq& B_1+\sum_{0<t_{k}<t<+\infty}I_{1k}(t_{k},u(t_{k}),u^{\prime}(t_{k}))-
\sum_{k=1}^{+\infty}I_{1k}(t_{k},u(t_{k}),u^{\prime}(t_{k})) \\
&&+\int_{t}^{+\infty}|f(s,u(s),v(s),u^{\prime }(s),v^{\prime }(s))|ds \\
&\leq&
B_1+2\sum_{k=1}^{+\infty}\psi_{k,\rho}+\int_{0}^{+\infty}\Phi_\rho(s)ds<+%
\infty,
\end{eqnarray*}%
for $\rho>0$ given by (\ref{rho5}) and
\begin{eqnarray*}
\lim_{t\rightarrow+\infty} \left(T_{1} (u,v)\right)^{\prime }(t)&=&
B_1+\sum_{0<t_{k}<t<+\infty}I_{1k}(t_{k},u(t_{k}),u^{\prime}(t_{k}))\\
&&-\sum_{k=1}^{+\infty}I_{1k}(t_{k},u(t_{k}),u^{\prime}(t_{k})) \\
&&-\lim_{t\rightarrow+\infty}\int_{t}^{+\infty}f(s,u(s),v(s),u^{\prime
}(s),v^{\prime }(s))ds \\
&\leq&
B_1+2\sum_{k=1}^{+\infty}\psi_{k,\rho}+\int_{0}^{+\infty}\Phi_\rho(s)ds<+%
\infty.
\end{eqnarray*}

So, $T_{1}X\subset X_1$. Analogously, $T_{2}X\subset X_2$. Therefore, $T$ is
well defined in $X$ and, as $f$ and $h$ are $L^{1}-$ Carath\'{e}odory
functions, by Definition \ref{seq5}, $T$ is continuous.\newline

To prove that $TD$ is relatively compact, for $D\subseteq X$, it is enough
to show that:

\begin{enumerate}
\item[$i)$] $TD$ is uniformly bounded, for $D$ a bounded set in $X$;

\item[$ii)$] $TD$ is equicontinuous on each interval $]x_{k},x_{k+1}]\times]%
\tau _{j},\tau _{j+1}]$, for $k,j=1,2,...$;

\item[$iii)$] $TD$ is equiconvergent at each impulsive point and at infinity.
\end{enumerate}

\textbf{Step 2}: $TD$ \textit{\ is uniformly bounded, for }$D$\textit{\ a
bounded set in } $X$.\medskip

Let $D\subset X$ be a bounded subset. Thus, there is $\rho_1 >0$ such that,
for  $\left( u,v\right) \in D$,
\begin{eqnarray}
\left\Vert \left( u,v\right) \right\Vert _{X} &=&\max \left\{ \left\Vert
u\right\Vert _{X_{1}},\left\Vert v\right\Vert _{X_{2}}\right\}  \notag \\
&=&\max \left\{ \left\Vert u\right\Vert _{0},\left\Vert u^{\prime
}\right\Vert _{1},\left\Vert v\right\Vert _{0},\left\Vert v^{\prime
}\right\Vert _{1}\right\} <\rho _{1}.  \label{ro1_5}
\end{eqnarray}

Define
\begin{equation}
K_{i}:=\sup_{t\in \left[ 0,+\infty \right[ }\left( \frac{|A_{i}|+|B_{i}t|}{%
1+t}\right) ,\;\,i=1,2,\;\;Q(s):=\sup_{t\in \left[ 0,+\infty \right[ }\frac{%
\left\vert G(t,s)\right\vert }{1+t},\label{sup5}
\end{equation}%
with $0\leq Q(s)\leq 1,\;\forall s\in \left[ 0,+\infty \right[$.

As, $f$ is a $L^{1}$-Carath\'{e}odory function, then
\begin{eqnarray*}
\Vert T_{1}\left( u,v\right) \Vert _{0} &=&\sup_{t\in \lbrack 0,+\infty
\lbrack }\frac{|T_{1}\left( u,v\right) (t)|}{1+t} \\
&\leq &\sup_{t\in \left[ 0,+\infty \right[ }\left( \frac{|A_{1}|+|B_{1}t|}{%
1+t}\right.  \\
&&+\frac{1}{1+t}\sum_{0<t_{k}<t<+\infty }|I_{0k}(t_{k},u(t_{k}),u^{\prime
}(t_{k}))+I_{1k}(t_{k},u(t_{k}),u^{\prime }(t_{k}))\left( t-t_{k}\right) | \\
&&+\left. \frac{t}{1+t}\sum_{k=1}^{+\infty }|I_{1k}(t_{k},u(t_{k}),u^{\prime
}(t_{k}))|\right)  \\
&&+\int_{0}^{+\infty }\sup_{t\in \lbrack 0,+\infty \lbrack }\frac{|G(t,s)|}{%
1+t}|f(s,u(s),v(s),u^{\prime }(s),v^{\prime }(s))|ds \\
&\leq &K_{1}+\sup_{t\in \left[ 0,+\infty \right[ }\left( \frac{1}{1+t}%
\sum_{0<t_{k}<t<+\infty }\left[ \varphi _{k,\rho }+\psi _{k,\rho }(t-t_{k})%
\right] \right)  \\
&&+\sup_{t\in \left[ 0,+\infty \right[ }\left( \frac{t}{1+t}%
\sum_{k=1}^{+\infty }\psi _{k,\rho }\right) +\int_{0}^{+\infty }Q(s)\Phi
_{\rho }(s)ds \\
&\leq &K_{1}+\sum_{k=1}^{+\infty }\varphi _{k,\rho }+2\sum_{k=1}^{+\infty
}\psi _{k,\rho }+\int_{0}^{+\infty }Q(s)\Phi _{\rho }(s)ds<+\infty ,\forall
\left( u,v\right) \in D,
\end{eqnarray*}%
and

\begin{eqnarray*}
\Vert \left(T_{1}\left( u,v\right)\right)^\prime\Vert_1 &=& \sup_{t
\in[0,+\infty[} \vert \left(T_{1}\left( u,v\right)\right)^\prime(t)\vert \\
&\leq& |B_1|+\sup_{t
\in[0,+\infty[}\sum_{0<t_{k}<t<+\infty}|I_{1k}(t_{k},u(t_{k}),u^{%
\prime}(t_{k}))|\\
&&+\sum_{k=1}^{+\infty}|I_{1k}(t_{k},u(t_{k}),u^{\prime}(t_{k}))| \\
&&+\sup_{t \in[0,+\infty[}\int_{t}^{+\infty}|f(s,u(s),v(s),u^{\prime
}(s),v^{\prime }(s))|ds \\
&\leq&
|B_1|+2\sum_{k=1}^{+\infty}\psi_{k,\rho}+\int_{0}^{+\infty}\Phi_\rho(s)ds<+%
\infty.
\end{eqnarray*}

Therefore, $T_1D$ is bounded and by similar arguments, $T_2D$ is also
bounded. Furthermore, $\left\Vert T( u,v)\right\Vert_{X}<+\infty$, that is $%
TD$\ is uniformly bounded on $X$.\newline

\textbf{Step 3}: $TD$\textit{\ is equicontinuous on each interval} $%
]x_{k},x_{k+1}]\times ]\tau _{j},\tau _{j+1}]$, \textit{\ that is}, $T_{1}D$
\textit{\ is equicontinuous on each interval }$]x_{k},x_{k+1}],$\textit{\
for }$k\in  \mathbb{N}$, $0<t_{1}<\cdots <t_{k}<\cdots ,$\textit{\ and }$T_{2}D$%
\textit{\ is equicontinuous on each interval }$]\tau _{j},\tau _{j+1}]$,%
\textit{\ for }$j\in  \mathbb{N}$\textit{\ and }$0<\tau _{1}<\cdots <\tau
_{j}<\cdots .$\medskip

Consider $I\subseteq ]x_{k},x_{k+1}]$ and $\iota _{1},\iota _{2}\in I$ such
that $\iota _{1}\leq \iota _{2}$. For $(u,v)\in D$, we have%
\begin{eqnarray*}
&&\lim_{\iota _{1}\rightarrow \iota _{2}}\left\vert \frac{T_{1}\left(
u,v\right) (\iota _{1})}{1+\iota _{1}}-\frac{T_{1}\left( u,v\right) (\iota
_{2})}{1+\iota _{2}}\right\vert \leq \lim_{\iota _{1}\rightarrow \iota
_{2}}\left\vert \frac{A_{1}+B_{1}\iota _{1}}{1+\iota _{1}}-\frac{%
A_{1}+B_{1}\iota _{2}}{1+\iota _{2}}\right\vert \\
&&+\left\vert \frac{1}{1+\iota _{1}}\sum_{0<t_{k}<\iota _{1}}\left[
I_{0k}(t_{k},u(t_{k}),u^{\prime }(t_{k}))+I_{1k}(t_{k},u(t_{k}),u^{\prime
}(t_{k}))\left( \iota _{1}-t_{k}\right) \right] \right. \\
&&-\frac{1}{1+\iota _{2}}\sum_{0<t_{k}<\iota _{2}}\left[
I_{0k}(t_{k},u(t_{k}),u^{\prime }(t_{k}))+I_{1k}(t_{k},u(t_{k}),u^{\prime
}(t_{k}))\left( \iota _{2}-t_{k}\right) \right] \\
&&\left. -\frac{\iota _{1}}{1+\iota _{1}}\sum_{k=1}^{+\infty
}I_{1k}(t_{k},u(t_{k}),u^{\prime }(t_{k}))+\frac{\iota _{2}}{1+\iota _{2}}%
\sum_{k=1}^{+\infty }I_{1k}(t_{k},u(t_{k}),u^{\prime }(t_{k}))\right\vert \\
&&+\int_{0}^{+\infty }\lim_{\iota _{1}\rightarrow \iota _{2}}\left\vert
\frac{G(\iota _{1},s)}{1+\iota _{1}}-\frac{G(\iota _{2},s)}{1+\iota _{2}}%
\right\vert \left\vert f(s,u(s),v(s),u^{\prime }(s),v^{\prime
}(s))\right\vert ds=0,
\end{eqnarray*}%
as $\iota _{1}\rightarrow \iota _{2}$, and%
\begin{eqnarray*}
&&\lim_{\iota _{1}\rightarrow \iota _{2}}\left\vert \left( T_{1}\left(
u,v\right) (\iota _{1})\right) ^{\prime }-\left( T_{1}\left( u,v\right)
(\iota _{2})\right) ^{\prime }\right\vert \\
&\leq &\lim_{\iota _{1}\rightarrow \iota _{2}}\left\vert \sum_{0<t_{k}<\iota
_{1}}I_{1k}(t_{k},u(t_{k}),u^{\prime }(t_{k}))-\sum_{0<t_{k}<\iota
_{2}}I_{1k}(t_{k},u(t_{k}),u^{\prime }(t_{k}))\right. \\
&&\left. -\int_{\iota _{1}}^{+\infty }f(s,u(s),v(s),u^{\prime }(s),v^{\prime
}(s))ds+\int_{\iota _{2}}^{+\infty }f(s,u(s),v(s),u^{\prime }(s),v^{\prime
}(s))ds\right\vert \\
&\leq &\lim_{\iota _{1}\rightarrow \iota _{2}}\sum_{\iota _{1}<t_{k}<\iota
_{2}}|I_{1k}(t_{k},u(t_{k}),u^{\prime }(t_{k}))|+\int_{\iota _{1}}^{\iota
_{2}}|f(s,u(s),v(s),u^{\prime }(s),v^{\prime }(s))|ds \\
&\leq &\lim_{\iota _{1}\rightarrow \iota _{2}}\sum_{\iota _{1}<t_{k}<\iota
_{2}}\psi _{k,\rho }+\int_{\iota _{1}}^{\iota _{2}}\Phi _{\rho }(s)ds=0.
\end{eqnarray*}

Therefore, $T_{1}D$ is equicontinuous on $X_{1}$. Similarly, we can show
that $T_{2}D$ is equicontinuous on $X_{2}$, too. Thus, $TD$ is
equicontinuous on $X$.\medskip

\textbf{Step 4}: $TD$ \textit{is equiconvergent at each impulsive point and
at infinity, that is} $T_1D$, \textit{is equiconvergent at} $t=t^+_i$, $%
i=1,2,...,$ \textit{and at infinity}, and $T_2D$, \textit{is
equiconvergent at} $\tau=\tau^{+}_l$, $l=1,2$,...,\textit{and at infinity }%
.\medskip

First, let us prove the equiconvergence at $t=t^+_i$, for $i=1,2,...$. The
proof for the equiconvergence at $\tau=\tau^{+}_l$, for $l=1,2,...,$ is
analogous.

Thus, it follows%
\begin{eqnarray*}
&&\left\vert \frac{T_{1}\left( u,v\right) (t)}{1+t}-\lim_{t\rightarrow
t_{i}^{+}}\frac{T_{1}\left( u,v\right) (t)}{1+t}\right\vert \leq \left\vert
\frac{A_{1}+B_{1}t}{1+t}-\frac{A_{1}+B_{1}t_{i}}{1+t_{i}}\right\vert \\
&&+\left\vert \frac{1}{1+t}\sum_{0<t_{k}<t<+\infty }\left[
I_{0k}(t_{k},u(t_{k}),u^{\prime }(t_{k}))+I_{1k}(t_{k},u(t_{k}),u^{\prime
}(t_{k}))\left( t-t_{k}\right) \right] \right. \\
&&\left. -\frac{1}{1+t_{i}}\sum_{0<t_{k}<t_{i}^{+}}\left[
I_{0k}(t_{k},u(t_{k}),u^{\prime }(t_{k}))+I_{1k}(t_{k},u(t_{k}),u^{\prime
}(t_{k}))\left( t_{i}-t_{k}\right) \right] \right\vert \\
&&+\left\vert -\frac{t}{1+t}\sum_{k=1}^{+\infty
}I_{1k}(t_{k},u(t_{k}),u^{\prime }(t_{k}))+\frac{t_{i}}{1+t_{i}}%
\sum_{k=1}^{+\infty }I_{1k}(t_{k},u(t_{k}),u^{\prime }(t_{k}))\right\vert \\
&&+\int_{0}^{+\infty }\left\vert \frac{G(t,s)}{1+t}-\frac{G(t,s)}{1+t_{i}}%
\right\vert \Phi _{\rho }(s)ds\rightarrow 0,
\end{eqnarray*}%
uniformly on $(u,v)\in D$, as $t\rightarrow t_{i}^{+}$, for $i=1,2,...$ and%
\begin{eqnarray*}
&&\left\vert \left( T_{1}\left( u,v\right) (t)\right) ^{\prime
}-\lim_{t\rightarrow t_{i}^{+}}\left( T_{1}\left( u,v\right) (t)\right)
^{\prime }\right\vert \\
&=&\left\vert \sum_{0<t_{k}<t<+\infty }I_{1k}(t_{k},u(t_{k}),u^{\prime
}(t_{k}))-\sum_{0<t_{k}<t_{i}^{+}}I_{1k}(t_{k},u(t_{k}),u^{\prime
}(t_{k}))\right. \\
&&\left. -\int_{t}^{+\infty }f(s,u(s),v(s),u^{\prime }(s),v^{\prime
}(s))ds+\int_{t_{i}}^{+\infty }f(s,u(s),v(s),u^{\prime }(s),v^{\prime
}(s))ds\right\vert \\
&\leq &\left\vert \sum_{0<t_{k}<t<+\infty }I_{1k}(t_{k},u(t_{k}),u^{\prime
}(t_{k}))-\sum_{0<t_{k}<t_{i}^{+}}I_{1k}(t_{k},u(t_{k}),u^{\prime
}(t_{k}))\right\vert \\
&&+\left\vert -\int_{t}^{+\infty }f(s,u(s),v(s),u^{\prime }(s),v^{\prime
}(s))ds+\int_{t_{i}}^{+\infty }f(s,u(s),v(s),u^{\prime }(s),v^{\prime
}(s))ds\right\vert \\
&\leq &\left\vert \sum_{0<t_{k}<t<+\infty }I_{1k}(t_{k},u(t_{k}),u^{\prime
}(t_{k}))-\sum_{0<t_{k}<t_{i}^{+}}I_{1k}(t_{k},u(t_{k}),u^{\prime
}(t_{k}))\right\vert \\
&&+\int_{t_{i}}^{t}\phi _{\rho }(s)ds\rightarrow 0,
\end{eqnarray*}%
uniformly on $(u,v)\in D$, as $t\rightarrow t_{i}^{+}$, for $i=1,2,...$.

Therefore, $T_1D$ is equiconvergent at each point $t=t^+_i$, for $%
i=1,2,... $. Analogously, it can be proved that $T_2D$ is equiconvergent
at each point $\tau=\tau^{+}_l$, for $l=1,2,...,$.

So, $TD$ is equiconvergent at each impulsive point.

To prove the equiconvergence at infinity, for the operator $T_{1}$, we have%
\begin{eqnarray*}
&&\left\vert \frac{T_{1}\left( u,v\right) (t)}{1+t}-\lim_{t\rightarrow
+\infty }\frac{T_{1}\left( u,v\right) (t)}{1+t}\right\vert \leq \left\vert
\frac{A_{1}+B_{1}t}{1+t}-B_{1}\right\vert  \\
&&+\left\vert \frac{1}{1+t}\sum_{0<t_{k}<t<+\infty }\left[
I_{0k}(t_{k},u(t_{k}),u^{\prime }(t_{k}))+I_{1k}(t_{k},u(t_{k}),u^{\prime
}(t_{k}))\left( t-t_{k}\right) \right] \right.  \\
&&\left. -\lim_{t\rightarrow +\infty }\frac{1}{1+t}\sum_{0<t_{k}<t<+\infty }%
\left[ I_{0k}(t_{k},u(t_{k}),u^{\prime
}(t_{k}))+I_{1k}(t_{k},u(t_{k}),u^{\prime }(t_{k}))\left( t-t_{k}\right) %
\right] \right\vert  \\
&&+\left\vert -\frac{t}{1+t}\sum_{k=1}^{+\infty
}I_{1k}(t_{k},u(t_{k}),u^{\prime }(t_{k}))+\lim_{t\rightarrow +\infty }\frac{%
t}{1+t}\sum_{k=1}^{+\infty }I_{1k}(t_{k},u(t_{k}),u^{\prime
}(t_{k}))\right\vert  \\
&&+\int_{0}^{+\infty }\left\vert \frac{G(t,s)}{1+t}-\lim_{t\rightarrow
+\infty }\frac{G(t,s)}{1+t}\right\vert \left\vert f(s,u(s),v(s),u^{\prime
}(s),v^{\prime }(s))ds\right\vert  \\
&\leq &\left\vert \frac{A_{1}+B_{1}t}{1+t}-B_{1}\right\vert  \\
&&+\left\vert \frac{1}{1+t}\sum_{0<t_{k}<t<+\infty }\left[
I_{0k}(t_{k},u(t_{k}),u^{\prime }(t_{k}))+I_{1k}(t_{k},u(t_{k}),u^{\prime
}(t_{k}))\left( t-t_{k}\right) \right] \right.  \\
&&\left. -\sum_{k=1}^{+\infty }I_{1k}(t_{k},u(t_{k}),u^{\prime
}(t_{k}))\right\vert  \\
&&+\left\vert -\frac{t}{1+t}\sum_{k=1}^{+\infty
}I_{1k}(t_{k},u(t_{k}),u^{\prime }(t_{k}))+\sum_{k=1}^{+\infty
}I_{1k}(t_{k},u(t_{k}),u^{\prime }(t_{k}))\right\vert  \\
&&+\int_{0}^{+\infty }\left\vert \frac{G(t,s)}{1+t}-\lim_{t\rightarrow
+\infty }\frac{G(t,s)}{1+t}\right\vert \Phi _{\rho }(s)ds\rightarrow 0,
\end{eqnarray*}%
uniformly on $(u,v)\in D$, as $t\rightarrow +\infty $.

Analogously,%
\begin{eqnarray*}
&&\left\vert \left( T_{1}\left( u,v\right) (t)\right) ^{\prime
}-\lim_{t\rightarrow +\infty }\left( T_{1}\left( u,v\right) (t)\right)
^{\prime }\right\vert  \\
&=&\left\vert \sum_{0<t_{k}<t<+\infty }I_{1k}(t_{k},u(t_{k}),u^{\prime
}(t_{k}))-\sum_{k=1}^{+\infty }I_{1k}(t_{k},u(t_{k}),u^{\prime
}(t_{k}))\right.  \\
&&-\int_{t}^{+\infty }f(s,u(s),v(s),u^{\prime }(s),v^{\prime }(s))ds \\
&&-\lim_{t\rightarrow +\infty }\sum_{0<t_{k}<t<+\infty
}I_{1k}(t_{k},u(t_{k}),u^{\prime }(t_{k}))+\lim_{t\rightarrow +\infty
}\sum_{k=1}^{+\infty }I_{1k}(t_{k},u(t_{k}),u^{\prime }(t_{k})) \\
&&\left. +\lim_{t\rightarrow +\infty }\int_{t}^{+\infty
}f(s,u(s),v(s),u^{\prime }(s),v^{\prime }(s))ds\right\vert  \\
&\leq &\left\vert \sum_{k=0}^{+\infty }I_{1k}(t_{k},u(t_{k}),u^{\prime
}(t_{k}))-\sum_{k=1}^{+\infty }I_{1k}(t_{k},u(t_{k}),u^{\prime
}(t_{k}))\right\vert  \\
&&+\int_{t}^{+\infty }|f(s,u(s),v(s),u^{\prime }(s),v^{\prime }(s))|ds \\
&\leq &\left\vert \sum_{0<t_{k}<t<+\infty }I_{1k}(t_{k},u(t_{k}),u^{\prime
}(t_{k}))-\sum_{k=1}^{+\infty }I_{1k}(t_{k},u(t_{k}),u^{\prime
}(t_{k}))\right\vert  \\
&&+\int_{t}^{+\infty }\Phi _{\rho }(s)ds\rightarrow 0,
\end{eqnarray*}%
uniformly on $(u,v)\in D$, as $t\rightarrow +\infty $.

So, $T_1D$ is equiconvergent at $+\infty$. Following the same arguments, $%
T_2D$ is equiconvergent at $+\infty$, too. Therefore, $TD$ is equiconvergent
at $+\infty$. \newline

Therefore, $TD$ is relatively compact and, by Definition \ref{comp}, $T$ is compact.
\newline

In order to apply Theorem \ref{main5}, we need the next step:\newline

\textbf{Step 5}: $T\Omega\subset \Omega$ \textit{for some} $\Omega\subset X$
\textit{a closed and bounded set.}\medskip

Consider
\begin{equation*}
\Omega :=\left\{ (u,v)\in E:\left\Vert (u,v)\right\Vert _{X}\leq \rho
_{2}\right\} ,
\end{equation*}%
with $\rho _{2}>0$ such that%
\begin{equation}
\rho _{2}:=\max \left\{
\begin{array}{c}
\rho _{1},\;\,K_{1}+\sum_{k=1}^{+\infty }\varphi _{k,\rho
}+2\sum_{k=1}^{+\infty }\psi _{k,\rho }+\int_{0}^{+\infty }Q(s)\Phi _{\rho
}(s)ds, \\
\\
K_{2}+\sum_{k=1}^{+\infty }\phi _{j,\rho }+2\sum_{k=1}^{+\infty }\vartheta
_{j,\rho }+\int_{0}^{+\infty }Q(s)\Psi _{\rho }(s)ds, \\
\\
|B_{1}|+2\sum_{k=1}^{+\infty }\psi _{k,\rho }+\int_{0}^{+\infty }\Phi _{\rho
}(s)ds, \\
\\
\ |B_{2}|+2\sum_{k=1}^{+\infty }\vartheta _{j,\rho }+\int_{0}^{+\infty }\Psi
_{\rho }(s)ds,%
\end{array}%
\right\} ,  \label{ro2}
\end{equation}%
with $\rho _{1}$ given by (\ref{ro1_5}). According to Step 2 and $K_{1}$, $%
K_{2}$ and $Q(s)$ given by (\ref{sup5}), we have
\begin{eqnarray*}
\left\Vert T(u,v)\right\Vert _{X} &=&\left\Vert \left(
T_{1}(u,v),T_{2}\left( u,v\right) \right) \right\Vert _{X_{1}} \\
&=&\max \left\{ \left\Vert T_{1}\left( u,v\right) \right\Vert
_{X},\;\left\Vert T_{2}\left( u,v\right) \right\Vert _{X_{2}}\right\}  \\
&=&\max \left\{ \left\Vert T_{1}\left( u,v\right) \right\Vert
_{0},\;\left\Vert \left( T_{1}\left( u,v\right) \right) ^{\prime
}\right\Vert _{1},\;\left\Vert T_{2}\left( u,v\right) \right\Vert
_{0},\;\left\Vert \left( T_{2}\left( u,v\right) \right) ^{\prime
}\right\Vert _{1}\right\}  \\
&\leq &\rho _{2}.
\end{eqnarray*}

So, $T\Omega\subset \Omega$, and by Theorem \ref{schauder}, the operator $%
T\left( u,v\right) =\left( T_{1}\left( u,v\right) ,T_{2}\left( u,v\right)
\right)$, has a fixed point $(u, v)$.

By standard techniques, and Lemma \ref{lema1_5}, it can be
shown that this fixed point is a solution
of problem (\ref{art5})-(\ref{imp5}).
\end{proof}

\section{Motion of a spring pendulum}

Consider the motion of the spring pendulum of a mass
attached to one end of a spring and the other
end attached to the ceiling. By  \cite{NSS}, we
represent this motion by the system,

\begin{equation}
\left\{
\begin{array}{l}
l^{\prime\prime}(t)=\frac{1}{t^3}\left(l(t)\theta^\prime(t)-g\cos(\theta(t))-\frac{k}{m}(l(t)-l_0)\right),\;t\in \left[ 0,+\infty \right[ \\
\\
\theta^{\prime\prime}(t)=\frac{1}{t^3}\left(\frac{-gl(t)\sin(\theta(t))-2l(t)l^\prime(t)\theta^\prime(t)}{l^2(t)}\right),
\end{array}%
\right.\label{Ap5.1}
\end{equation}

where:
\begin{itemize}
\item $l(t)$, $l_0$ are the length at time $t$ and the natural length of the spring, respectively;

\item $\theta(t)$ is the angle between the pendulum and the
vertical;

\item $m$, $k$, $g$ are the mass, the spring constant and gravitational force, respectively;
\end{itemize}
together with the boundary conditions

\begin{equation}
\left\{
\begin{array}{c}
l(0)=0,\;\,\theta(0)=0, \\
l^{\prime }(+\infty)=B_1,\;\,\theta^{\prime }(+\infty)=B_2,%
\end{array}%
\right.\label{B5}
\end{equation}%
with  $B_1, B_2 \in \left[0,\,\pi\right]$, and the generalized impulsive conditions

\begin{equation}
\begin{cases}
\Delta \theta_1(t_{k})=\frac{1}{k^3}(\alpha_1\theta_1(t_{k})+\alpha_2\theta_1^{\prime }(t_{k})),\\
\\
\Delta \theta_1^{\prime }(t_{k})=\frac{1}{k^\beta}(\alpha_3\theta_1(t_{k})+\alpha_4\theta_1^{\prime }(t_{k})),\;\beta\geq3,\\
 \\
\Delta \theta_2(\tau _{j})=\frac{1}{j^\gamma}(\alpha_5\theta_2(\tau_{j})+\alpha_6\theta_2^{\prime }(\tau_{j})),\;\gamma\geq3\\
 \\
\Delta \theta_2^{\prime }(\tau _{j})=\frac{1}{j^3}(\alpha_7\theta_2(\tau_{j})+\alpha_8\theta_2^{\prime }(\tau_{j})),
\end{cases}
\label{BI5}
\end{equation}
with $\alpha_i \in \mathbb{R}$, $i=1,2,...,8$ and for $k,j \in \mathbb{N}$, $0<t_{1}<\cdots <t_{k}<\cdots $, $0<\tau _{1}<\cdots \tau
_{j}<\cdots $.

\begin{figure}[h]
    \center
    \includegraphics[width=6cm]{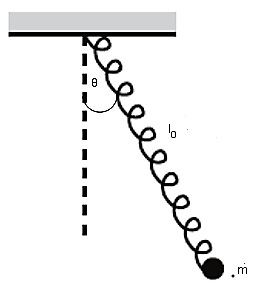}
    \label{f7}
    \caption{Motion of the spring pendulum.}
\end{figure}

The system (\ref{Ap5.1})-(\ref{BI5}) is a particular case of the problem (\ref{art5})-(\ref{imp5}), with
\begin{eqnarray*}
f(t,x,y,z,w)&=& x w-g\cos(y)-\frac{k}{m}(x-l_0),\\
h(t,x,y,z,w)&=&\frac{-gx\sin(y)-2xzw}{x^2},\\
I_{0k}(t_{k},x,z)&=&\frac{1}{k^3}(\alpha_1x+\alpha_2z),\;\;I_{1k}(t_{k},x,z)=\frac{1}{k^\beta}(\alpha_3x+\alpha_4z),\;\beta\geq3,\\
J_{0j}(\tau _{j},y, w)&=&\frac{1}{j^\gamma}(\alpha_5y+\alpha_6w),\;\gamma\geq3,
\;\;J_{1j}(\tau _{j},y, w)=\frac{1}{j^3}(\alpha_7y+\alpha_8w),
\label{f5,h5}
\end{eqnarray*}
with $t_k=k,\,\tau_j=j,\;k,j\in \mathbb{N}$, $\alpha_i \in \mathbb{R}$, $i=1,2,...,8$.

In fact, $f$ and $h$ are $L^{1}$-Carath\'{e}odory functions, with

\begin{equation*}
f(t,x,y,z,w)\leq\frac{1}{t^3}\left(\rho^2(1+t)+g+\frac{k}{m}(\rho(1+t)+l_0)\right):=\phi_\rho(t),
\end{equation*}
\begin{eqnarray*}
h(t,x,y,z,w)&\leq&\frac{1}{t^3}\left(\frac{g\rho(1+t)+2\rho^3(1+t)}{l^2(t)}\right)\\
&\leq&\frac{1}{t^3}\left(\frac{g\rho(1+t)+2\rho^3(1+t)}{l^2(t)}\right)\\
&\leq&\frac{1}{t^3}\cdot\frac{1}{\left(\min_{t\in \left[ 0,+\infty \right[}l(t)\right)^2}\left(g\rho(1+t)+2\rho^3(1+t)\right)\\
&:=&\varphi_\rho(t),
\end{eqnarray*}
and $\phi_\rho(t)$, $\varphi_\rho(t)$ verifying Definition \ref{Lcar5}.

Besides $I_{0k}$, $I_{1k}$, $J_{0j}$, $J_{1j}$, are Carath\'{e}odory sequences and verify (\ref{marimp5}), as

\begin{eqnarray*}
I_{0k}(t_{k},x,z)\leq\frac{\rho[(\alpha_1(1+k)+\alpha_2)]}{k^3},
\;\;I_{1k}(t_{k},x,z)\leq\frac{\rho[(\alpha_3(1+k)+\alpha_4)]}{k^\beta},\;\beta\geq3,\\
J_{0j}(\tau _{j},y, w)\frac{\rho[(\alpha_5(1+k)+\alpha_6)]}{j^\gamma},\;\gamma\geq3,
\;\;J_{1j}(\tau _{j},y, w)\leq\frac{\rho[(\alpha_7(1+k)+\alpha_8)]}{j^3}.
\label{f5,h51}
\end{eqnarray*}

So, by Theorem \ref{main5}, there is at least a pair $(l,\,\theta)\in\Big(PC_1^2\left(\left[ 0,+\infty\right[ \right)\times
PC_2^2\left(\left[ 0,+\infty\right[ \right)\Big)\cap X$,
solution of problem (\ref{Ap5.1}), (\ref{B5}) and (\ref{BI5}).\\\\

\noindent\textbf{Acknowledgements}

\noindent The authors thank the referees for suggestions and comments 
to further improve this work. This work was supported by National Funds through Funda\c{c}%
\~{a}o para a Ci\^{e}ncia e Tecnologia (FCT), project UID/MAT/04674/2013 (CIMA).\\

\noindent\textbf{Competing interests}

\noindent The authors declare that there is no conflict of interests regarding the
publication of this paper.\\

\noindent\textbf{Authors' contributions}

\noindent All authors read and approved the final manuscript.


\begin{thebibliography}{99}
\bibitem{BF} C. Bai, J. Fang, \textit{On positive solutions of boundary value problems for second-order functional differentia
equations on infinite intervals}, Journal of Mathematical Analysis and Applications, vol. 282, (2003) 711--731.

\bibitem{AKS} A. Dishliev, K. Dishlieva, S. Nenov, \textit{Specific Asymptotic Properties of the Solutions of Impulsive
Differential Equations. Methods and Applications}, Academic Publications, Ltd., 2012.

\bibitem{EKT} P. Eloe, E. Kaufmann, C. Tisdell, \textit{Multiple solutions of a boundary value problem on an unbounded domain},
Dyn. Syst. Appl. 15(1), (2006) 53--63.

\bibitem{G1} D. Guo, \textit{Boundary value problems for impulsive integro-differential equation on unbounded domains in a
Banach Space}, Appl. Math. Comput. 99 (1999) 1--15.

\bibitem{G2} D. Guo, \textit{A class of second-order impulsive integro-differential
equations on unbounded domain in Banach space}, Applied Mathematics and
Computations 125 (2002) 59--77.

\bibitem{HLX} Z. Hao, J. Liang, T. Xiao, \textit{Positive solutions of operator equations on half-line}, Journal of Mathematical Analysis
and Applications, vol. 314, (2006) 423--435.

\bibitem{Kaufmann} E. Kaufmann, N. Kosmatov, Y. Raffoul, \textit{A second-order boundary value problem with impulsive effects on
an unbounded domain}, Nonlinear Analysis 69 (2008) 2924--2929.

\bibitem{EHLEE} E. Lee, Y-H. Lee, \textit{Multiple Positive Solutions of a
Singular Gelfand Type Problem for Second-Order Impulsive Differential Systems},
Mathematical and Computer Modelling 40 (2004) 307--328.

\bibitem{EYH} E. Lee, Y-H. Lee, \textit{Multiple positive solutions of singular
two point boundary value problems for second order impulsive
differential equations}, Applied Mathematics and Computation 158 (2004) 745--759.

\bibitem{LEE+LIU} Y-H. Lee, X. Liu, \textit{Study of singular boundary value problems
for second order impulsive differential equations},
J. Math. Anal. Appl. 331 (2007) 159--176.

\bibitem{LGe1} H. Lian, W. Ge, \textit{Solvability for second-order three-point boundary value problems on a half-line}, Applied
Mathematics Letters, vol. 19 ( 2006) 1000--1006.

\bibitem{Yuji} Y. Liu, \textit{Existence of Solutions of Boundary Value Problems for Coupled Singular Differential Equations
on Whole Line with Impulses}, Mediterr. J. Math. 12 (2015), 697--716.

\bibitem{LHW} L. Liu, X. Hao, Y. Wu, \textit{Unbounded Solutions of Second-Order Multipoint Boundary Value Problem on the Half-
Line, Boundary Value Problems}, Volume 2010, 15 pages.

\bibitem{MR} M. Meehan, D. O'Regan, \textit{Multiple nonnegative solutions of nonlinear integral equations on compact and semi-
infinite intervals}, Appl.Anal. 74 (2000) 413--427.

\bibitem{MinhosI} F. Minh\'{o}s, \textit{Impulsive problems on the half-line with infinite impulse moments},
Lith Math J (2017) 57: 69, doi:10.1007/s10986-017-9344-5.

\bibitem{FC1} F. Minh\'{o}s, R. Carapinha, \textit{Half-linear impulsives problems for classical singular
 $\phi$-Laplacian with generalized impulsive conditions}, to appear.

\bibitem{FR1} F. Minh\'{o}s, R. Sousa, \textit{Solvability of second order coupled systems on the half-line}, to appear.

\bibitem{NSS} R. Nagle, E. Saff, A. Snider, \textit{Fundamentals of Differential Equations},
8th Edition, Pearson Education Limited, 2014.

\bibitem{PG} P. Palamides, G. Galanis, \textit{Positive, unbounded and monotone solutions of the singular second Painlev equation
on the half-line}, Nonlinear Anal. 57 (2004) 401--419.

\bibitem{PLC} H. Pang, M. Lu, C. Cai, \textit{The method of upper and lower solutions to
impulsive differential equations with integral
boundary conditions}, Advances in Difference Equations 2014, 2014:183.

\bibitem{WLW} Y. Wang, L. Liu, Y. Wu, \textit{Positive solutions of singular boundary value problems on the half-line}, Applied
Mathematics and Computation, vol. 197, (2008) 789--796.

\bibitem{WZL} W-X. Wang, L. Zhang, Z. Liang, \textit{Initial value problems for nonlinear impulsive
integro-differential equations in Banach space},
J. Math. Anal. Appl. 320 (2006) 510--527.

\bibitem{zeidler} E. Zeidler, \textit{Nonlinear Functional Analysis and Its
Applications: Fixed-Point Theorems}, Springer, New York (1986).
\end{thebibliography}
\end{document}